\theoremstyle{plain}
\newtheorem{theorem}{Theorem}[section]
\newtheorem*{theorem*}{Theorem}
\newtheorem{prop}[theorem]{Proposition}
\newtheorem{lemma}[theorem]{Lemma}
\newtheorem{cor}[theorem]{Corollary}
\theoremstyle{definition}
\newtheorem*{definition*}{Definition}
\numberwithin{equation}{section}
\newcommand{\D}{\mathbb D}
\newcommand{\A}{\alpha}
\newcommand{\Ph}{\varphi}
\newcommand{\CO}{C_{\Ph}}
\title[Isometric Composition Operators]{Isometric Composition Operators on the Analytic Besov Spaces}
\author{Robert F.~Allen\textsuperscript{1}, Katherine C.~Heller\textsuperscript{2}, and Matthew A.~Pons\textsuperscript{2}}
\address{\textsuperscript{1}Department of Mathematics and Statistics, University of Wisconsin-La Crosse}
\address{\textsuperscript{2}Department of Mathematics, North Central College}
\email{rallen@@uwlax.edu, kheller@noctrl.edu, mapons@noctrl.edu}
\keywords{Composition operator, Isometry, Besov space.}
\subjclass[2010]{primary 47B33; secondary 30H25}
\begin{document}

\begin{abstract}
We investigate the isometric composition operators on the analytic Besov spaces.  For $1<p<2$ we show that an isometric composition operator is induced only by a rotation of the disk.  For $p>2$, we extend previous work on the subject.  Finally, we analyze this same problem for the Besov spaces with an equivalent norm. 
\end{abstract}

\maketitle

\section{Introduction}

Classifying the form of isometries dates back to Banach who characterized these types of operators on certain $L^p$ spaces and $C(X)$, where $X$ is a compact metric space, in \cite{SB}. There has also been much work in identifying general isometries on spaces of analytic functions.  For the $H^p$ spaces, $1\leq p<\infty$, $p\neq 2$, Forelli \cite{FF} showed that isometries take the form of weighted composition operators.  Rudin extended this work to several variables in \cite{WR}, Kolaski showed that a similar result holds on the Bergman and weighted Bergman spaces in \cite{CKR} and \cite{CKW}, and El-Gebeily and Wolfe considered the case of the disk algebra in \cite{EW}.  Horner and Jamison \cite{HJMAPS} improved upon Kolaski's results for the Bergman spaces by providing a more detailed analysis of the self-maps described by Kolaski, and also investigated situations where the isometries do not take the form of weighted composition operators in \cite{HJ}.

In our investigation we will focus on classifying isometries among a specific class of operators.  Let $\Ph$ be an analytic self-map of the unit disk $\D$ and let $\mathcal{Y}$ be a space of functions analytic on $\D$.  We then define the composition operator $\CO$ by $\CO f=f\circ\Ph$ for all $f\in\mathcal{Y}$.  If in addition $\psi$ is analytic on $\D$, then we define the weighted composition operator $W_{\psi,\Ph}$ by $W_{\psi,\Ph}f=\psi(f\circ\Ph)$ for $f\in \mathcal{Y}$.  In the study of composition operators and weighted composition operators, the goal is to relate the operator properties of $\CO$ or $W_{\psi,\Ph}$ to the function theoretic properties of the symbol $\Ph$, or $\psi$ and $\Ph$, respectively.  For general information on composition operators we point the reader to \cite{CMC}.

The first results explicitly concerning composition operators appeared in Nordgren's paper \cite{EN}.  For the Hardy space $H^2$, Nordgren showed that any inner function $\Ph$ which fixes the origin induces an isometric composition operator; the converse to this is also true (see \cite{JS}).  A proof extending this result to the full range of $H^p$ spaces, $1\leq p<\infty$, was recently given in \cite{MVB}, though the authors do comment that the result may have been known to experts in the field.  In contrast to this, for the standard Bergman spaces $A^p$, $1\leq p<\infty$, it was shown in \cite{CH} that the only isometric composition operators arise from rotations of the disk.  The article \cite{MVB} extends this to weighted Bergman spaces $A_{\omega}^p$, where $1\leq p<\infty$ and $\omega$ is a radial function restricted by the requirement that the space $A_{\omega}^p$ is complete.

This line of inquiry has been carried out on many spaces of analytic functions in recent years.  It is typically the case that the rotations of the disk give rise to isometric composition operators which are surjective on many function spaces over the disk and one particular interest is to determine when these are the \textit{only} symbols which induce isometries.  This is not the case for the Dirichlet space $\mathcal{D}$ as it is shown in \cite{MVD} that any univalent, full map of the disk which fixes the origin induces an isometric composition operator.  For the Bloch space $\mathcal{B}$, it is also the case that there are non-rotation symbols which induce isometric composition operators.  In \cite{FC} these symbols are classified by the Bloch semi-norm while in \cite{MVBl} they are classified in terms of the hyperbolic derivative of the symbol $\Ph$.  However, on the $\alpha$-Bloch spaces (or Bloch-type spaces), $\alpha>0$, $\alpha\neq 1$, it is the case that only rotations of the disk induce isometric composition operators; see \cite{NZ}.  These two results stand in contrast since the Bloch space is the $\alpha=1$ Bloch-type space.  Finally, on BMOA, the paper \cite{JL} demonstrates that there are non-rotation symbols which induce isometric composition operators.  Here the author classifies these maps in terms of involution automorphisms of the disk and provides some examples of such maps; specifically the author provides examples of non-inner functions which induce isometric composition operators on BMOA.

For our particular setting we will seek to classify the isometric composition operators acting on the analytic Besov spaces $B_p$, $1<p<\infty$, $p\neq 2$; the space $B_2$ is norm equivalent to the Dirichlet space $\mathcal{D}$.  This was also explored in \cite{MVB} for $2<p<\infty$ but with the additional assumption that the symbol $\Ph$ is univalent.  It was then shown that (among the univalent symbols) the only isometric composition operators arise from rotations of the disk.  However, the additional assumption that $\Ph$ is univalent is a strong restriction.  We will weaken this restriction and prove that the result still holds; it still remains to answer this question in the most general setting.  The authors in \cite{MVB} prompt their audience to also consider the case $1<p<2$.  We show in this case that only rotations of the disk induce isometric composition operators.  To do this we will use results of Kolaski mentioned above which classify general isometries on the weighted Bergman spaces.  We also investigate this same question for $B_p, 1<p<\infty, p\neq 2$, equipped with an equivalent norm and we show in this case that an isometric composition operator must be induced by a rotation of the disk.

As a final note, the Bloch space, BMOA, the Dirichlet space, and the Besov spaces are all M\"{o}bius invariant spaces; moreover, for $1<q<2<p<\infty$ the following containments are all strict, $$B_q\subseteq \mathcal{D}\subseteq B_p\subseteq BMOA\subseteq \mathcal{B}.$$ Our results and those mentioned earlier demonstrate that there is a nice contrast in symbols that induce isometric composition operators on this range of spaces.

\section{Preliminaries}
We set $\D=\{z\in\mathbb{C}:|z|<1\}$ to be the open unit disk in the complex plane and $H(\D)$ to be the set of functions analytic on $\D$. For $1<p<\infty$, the analytic Besov space $B_p$ is given by $$B_p(\D)=\left\{f \in H(\D):\|f\|_p^p= \int_{\D}|f'(z)|^p(1-|z|^2)^{p-2}\,dA(z)<\infty\right\},$$ where $dA$ is Lebesgue area measure on $\D$ normalized so that $A(\D)=1$. The quantity $\|f\|_p$ defines a semi-norm on $B_p$ and we obtain a Banach space norm by setting $$\|f\|_{B_p}=|f(0)|+\|f\|_p.$$  Note that $B_2$ is the classical Dirichlet space (this norm is equivalent to the classical norm but only provides a Banach space structure).  These spaces are particularly relevant in the study of function spaces as they form a natural scale of M\"{o}bius invariant spaces meaning that for every conformal automorphism $\Ph$ of $\D$ we have $\|f\circ\Ph\|_p=\|f\|_p$ . Note that this is not enough to conclude that every automorphism of $\D$ induces an isometric composition operator on $B_p$; we shall see below that only the rotation automorphisms have this property.  For general references on the Besov spaces we recommend \cite{AFP}, \cite{KZB}, and \cite{KZ}; for general information on composition operators acting on Besov spaces, see \cite{MT} and \cite{KS}.

In order to study isometric composition operators on the Besov spaces, we will need to understand the isometries of the weighted Bergman spaces.  For $\A>-1$ and $1\leq p<\infty$, the standard weighted Bergman space is defined as $$A_{\A}^p(\D)=\left\{f\in H(\D):\|f\|_{A_{\A}^p}^p=\int_\D|f(z)|^p(1-|z|^2)^{\A}\,dA(z)<\infty\right\}.$$  From this definition it is clear that $f\in B_p$ if and only if $f'\in A_{p-2}^p$ with \begin{equation}\label{eqn:norm}\|f\|_p=\|f'\|_{A_{p-2}^p}.\end{equation}  In \cite{CKW} it is shown that an isometry acting on a weighted Bergman space is given by a weighted composition operator and the following theorem appears there as Theorem 1 though we have modified it for our setting.

\begin{theorem}\label{thm:kolaski}
Let $1\leq p<\infty$ with $p\neq 2$ and $\A>-1$. If $T:A_{\A}^p\rightarrow A_{\A}^p$ is a linear isometry and $T1=\psi$, then there is an analytic function $\Ph$ which maps $\D$ onto an open, dense subset of $\D$ such that $$Tf=\psi(f\circ\Ph)$$ for all $f\in A_{\A}^p$; moreover, for every bounded Borel function $h$ on $\D$, $$\int_\D h(\Ph(z))|\psi(z)|^p(1-|z|^2)^{\A}\,dA(z)=\int_\D h(z)(1-|z|^2)^{\A}\,dA(z).$$
\end{theorem}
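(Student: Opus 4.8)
The plan is to follow the route of Forelli and Kolaski: realize $A_{\A}^p$ as a closed subspace of an $L^p$ space, exploit the rigidity of the $L^p$ norm when $p\neq2$ to force $T$ into the shape of a weighted composition operator on polynomials, and then read off the analytic and measure-theoretic properties of the symbol. Write $d\mu_{\A}(z)=(1-|z|^2)^{\A}\,dA(z)$, so that $A_{\A}^p$ sits isometrically as a closed subspace of $L^p(\mu_{\A})$ and $T$ is a norm-preserving linear map of this subspace into itself. The first ingredient I would record is the basic rigidity lemma (due in essence to Banach, sharpened by Forelli): if $u,v\in L^p(\mu)$ satisfy $\|u+\lambda v\|_{L^p(\mu)}^p=\|u\|_{L^p(\mu)}^p+|\lambda|^p\|v\|_{L^p(\mu)}^p$ for every $\lambda\in\C$, then $uv=0$ $\mu$-a.e.; the proof is pointwise, comparing $\int\bigl(|u|^2+2\mathrm{Re}(\lambda v\bar u)+|\lambda|^2|v|^2\bigr)^{p/2}\,d\mu$ with the prescribed right-hand side and using that $t\mapsto t^{p/2}$ is strictly convex for $p>2$ and strictly concave for $p<2$ --- which is exactly where $p\neq2$ is needed.

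Set $\psi=T1$; since $\|\psi\|=\|1\|>0$, $\psi$ is a nonzero element of $A_{\A}^p$, so $\psi(z)\neq0$ for a.e.\ $z\in\D$ and $\Ph:=Tz/\psi$ is a well-defined measurable (a priori meromorphic) function on $\D$. The heart of the matter is to prove $T(z^n)=\psi\,\Ph^n$ for every $n\ge0$ by induction, the cases $n=0,1$ being the definitions. For the inductive step one must transfer the multiplicative structure of the monomials through $T$, and there are two equivalent ways to do this: feed well-chosen linear combinations of $1,z,\dots,z^{n+1}$, whose mutual norm relations the isometry preserves, into the rigidity lemma; or differentiate the identity $\|f_0+tg\|_{A_{\A}^p}^p=\|Tf_0+t\,Tg\|_{A_{\A}^p}^p$ in $t$ at $t=0$, which (since $1<p<\infty$ makes the $L^p$ norm smooth) yields the first-variation identity \[\int_{\D}|f_0|^{p-2}\overline{f_0}\,g\,d\mu_{\A}=\int_{\D}|Tf_0|^{p-2}\overline{Tf_0}\,(Tg)\,d\mu_{\A}\qquad(f_0,g\in A_{\A}^p).\] Applying this at $f_0=1+\epsilon w$, expanding in $\epsilon$, and dividing by the nonzero factor $p-2$, one arrives at the bilinear relation $\int_{\D}wg\,d\mu_{\A}=\int_{\D}|\psi|^{p-4}\bar\psi^{2}(Tw)(Tg)\,d\mu_{\A}$, which, together with $\int_{\D}z^{k}\,d\mu_{\A}=0$ for $k\ge1$ and the inductive hypothesis, determines $Tz^{n+1}$. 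Either way, linearity then gives $Tf=\psi\,(f\circ\Ph)$ for every polynomial $f$.

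It remains to identify $\Ph$. From $\|z^n\|_{A_{\A}^p}^p=\|Tz^n\|_{A_{\A}^p}^p=\int_{\D}|\psi|^p|\Ph|^{np}\,d\mu_{\A}$ with the left side bounded in $n$, one rules out $|\Ph|\ge1+\delta$ on any set of positive measure; thus $|\Ph|\le1$ a.e., so $\Ph$, being a bounded meromorphic function, is analytic on $\D$ with $\Ph(\D)\subseteq\overline{\D}$. Since $\Ph$ is nonconstant (otherwise $T$ would have one-dimensional range, impossible for an isometry of an infinite-dimensional space), the maximum modulus principle gives $\Ph(\D)\subseteq\D$ and the open mapping theorem gives that $\Ph(\D)$ is open. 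Next, $\|f\|_{A_{\A}^p}^p=\int_{\D}|f\circ\Ph|^p|\psi|^p\,d\mu_{\A}$ for all polynomials $f$, and since $|f|^p|g|^p=|fg|^p$ is again of this form, the finite Borel measures $\mu_{\A}$ and $\Ph_{*}\bigl(|\psi|^p\mu_{\A}\bigr)$ agree on a point-separating, multiplicatively closed family of functions in $C(\overline{\D})$; by Stone--Weierstrass they coincide, which is exactly the stated change-of-variables identity for every bounded Borel $h$. In particular $f\mapsto\psi\,(f\circ\Ph)$ is an isometry of $A_{\A}^p$, hence continuous, so by density of the polynomials $Tf=\psi\,(f\circ\Ph)$ for all $f\in A_{\A}^p$. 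Finally, taking $h=\chi_B$ for an arbitrary open disk $B\subseteq\D$ gives $\int_{\D}\chi_B(\Ph(z))|\psi(z)|^p\,d\mu_{\A}(z)=\mu_{\A}(B)>0$, so $\Ph(\D)$ meets $B$; hence $\Ph(\D)$ is dense.

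The step I expect to be the main obstacle is the inductive identification $T(z^n)=\psi\,\Ph^n$: one has to choose exactly the right test functions (or exploit the bilinear identity in exactly the right way) for the rigidity to bite, and one must do so before knowing that $\Ph$ is analytic or even bounded. This is also precisely where $p\neq2$ is indispensable: for $p=2$ the rigidity lemma fails --- its hypothesis reduces to orthogonality of $u$ and $v$, which does not force disjoint supports --- and indeed on the Hilbert space $A_{\A}^2$ there are isometries, such as the shift $e_n\mapsto e_{n+1}$ on the normalized monomial basis, that are not weighted composition operators.
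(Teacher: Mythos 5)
The first thing to say is that the paper offers no proof of this statement: it is imported (with notational adjustments) from Kolaski \cite{CKW}, with the full argument in \cite{CKR}, and the present paper only records two consequences of it. So your proposal can only be measured against Kolaski's argument, whose overall architecture --- realize $A_{\A}^p$ isometrically inside $L^p((1-|z|^2)^{\A}\,dA)$, establish $T(z^n)=\psi\,\Ph^n$ on monomials, then extract analyticity of $\Ph$ and the change-of-variables identity --- you have reproduced, together with an accurate diagnosis of where $p\neq 2$ enters and why $p=2$ genuinely fails.

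The genuine gap is the step you yourself flag: the inductive identification $T(z^n)=\psi\,\Ph^n$, which is the entire content of the theorem, is never actually carried out, and neither of your two proposed mechanisms closes it as described. The disjoint-support rigidity lemma cannot be the engine here: its conclusion $uv=0$ a.e.\ forces one of $u,v$ to vanish identically when both are analytic, so it can never ``bite'' on a pair of nonzero elements of $A_{\A}^p$; the lemma that actually powers the Forelli--Rudin--Kolaski arguments is the equimeasurability statement obtained by expanding $\lambda\mapsto\int|1+\lambda f|^p\,d\mu$ in powers of $\lambda$ and $\bar\lambda$ and matching coefficients, and making that work for \emph{every} $p\neq 2$ --- in particular for $p=4,6,\dots$, where the coefficients $\binom{p/2}{j}$ vanish for $j>p/2$ and Rudin's equimeasurability theorem fails in general --- is precisely the technical heart of the matter, which your sketch does not engage. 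Your second route is more promising: the first-variation identity is correct (for $p>1$; it is unavailable at $p=1$, which the statement includes), and the bilinear relation you derive from $f_0=1+\epsilon w$ is right, but combined with $\int_{\D}z^k\,d\mu_{\A}=0$ it only yields a family of orthogonality relations among the quantities $\int_{\D}|\psi|^{p-4}\bar{\psi}^2(Tz^j)(Tz^k)\,d\mu_{\A}$; you give no mechanism by which these ``determine'' $Tz^{n+1}$ to be $\psi\,\Ph^{n+1}$. Everything downstream of that step --- boundedness and hence analyticity of $\Ph$, openness and density of $\Ph(\D)$, the passage to all of $A_{\A}^p$ by continuity, and the change-of-variables identity --- is essentially sound, with one small repair: Stone--Weierstrass must be applied to the \emph{linear span} of $\{|f|^p: f\ \text{a polynomial}\}$, which is a self-adjoint, point-separating subalgebra of $C(\overline{\D})$ containing the constants, rather than to that multiplicatively closed but non-linear family itself.
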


We make two important observations.  First, if $E\subseteq \D$ is a Borel set, then the characteristic function $\chi_E$ is a bounded Borel function and thus $$\int_\D \chi_E(\Ph(z))|\psi(z)|^p(1-|z|^2)^{\A}\,dA(z)=\int_\D \chi_E(z)(1-|z|^2)^{\A}\,dA(z).$$  This immediately implies that
\begin{equation}\label{eqn:intequality}\int_{\Ph^{-1}(E)}|\psi(z)|^p(1-|z|^2)^{\A}\,dA(z)=\int_E(1-|z^2|)^{\A}\,dA(z)\end{equation} for every Borel set $E\subseteq \D$ and where $\Ph^{-1}(E)=\{z\in \D: \Ph(z)=w\textup{ for some }w\in E\}$.

The second observation comes from investigating the proof of the above theorem (the statement above comes from \cite{CKW} but the full proof is given in \cite{CKR}).  There the author actually shows that $\Ph$ is a full map of the disk meaning that $A[\D\setminus\Ph(\D)]=0.$ This will be relevant in several calculations below.

\section{Isometric Composition Operators on $B_p$}

Throughout this section we will assume that $\CO$ is an isometry on $B_p$.  The first four results are general in the sense that they apply to $B_p$ for $1<p<\infty$ with $p\neq 2$; in fact the first three of these apply to $B_p$ for $1<p<\infty$.

\begin{lemma}\label{lem:seminorm}
Let $1<p<\infty$. If $\CO$ is an isometry on $B_p$, then $\Ph(0)=0$ and $\|f\circ \Ph\|_p=\|f\|_p$ for all $f\in B_p$.
\end{lemma}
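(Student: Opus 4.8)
The plan is to exploit the two-part structure of the norm, $\|f\|_{B_p}=|f(0)|+\|f\|_p$, together with the fact that the seminorm $\|\cdot\|_p$ annihilates constants. Since $B_p$ is a vector space containing the constant functions, for any $f\in B_p$ and any $\lambda\in\C$ we have $f+\lambda\in B_p$ with $\|f+\lambda\|_p=\|f\|_p$ and likewise $\|(f\circ\Ph)+\lambda\|_p=\|f\circ\Ph\|_p$. Writing out the isometry identity $\|\CO(f+\lambda)\|_{B_p}=\|f+\lambda\|_{B_p}$ and subtracting the corresponding identity for $f$ itself, the seminorm contributions cancel and I am left with
\[
|f(\Ph(0))+\lambda|-|f(\Ph(0))|=|f(0)+\lambda|-|f(0)| \qquad\text{for all }\lambda\in\C.
\]

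Next I would extract from this the pointwise statement $f(\Ph(0))=f(0)$ for every $f\in B_p$. Setting $a=f(\Ph(0))$ and $b=f(0)$, the displayed equation says that $\lambda\mapsto|a+\lambda|-|b+\lambda|$ is constant on $\C$; evaluating at $\lambda=-b$ and $\lambda=-a$ forces $|a-b|=-|a-b|$, hence $a=b$ (one could equally well let $|\lambda|\to\infty$ along the rays emanating from $a-b$). Because $p>1$, the Bergman weight $(1-|z|^2)^{p-2}$ is integrable over $\D$, so the coordinate function $f(z)=z$ lies in $B_p$; applying the identity $f(\Ph(0))=f(0)$ to this $f$ yields $\Ph(0)=0$.

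Finally, with $\Ph(0)=0$ in hand, the isometry identity $|f(\Ph(0))|+\|f\circ\Ph\|_p=|f(0)|+\|f\|_p$ collapses, after cancelling $|f(0)|$, to $\|f\circ\Ph\|_p=\|f\|_p$ for all $f\in B_p$, which is the second assertion.

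I do not anticipate a serious obstacle; the only point requiring attention is that the argument relies on the test function $z$ belonging to $B_p$, and this is precisely where the standing hypothesis $p>1$ is used, via integrability of $(1-|z|^2)^{p-2}$. (If one wished to avoid even this, the same conclusion follows from any single nonconstant member of $B_p$, since $B_p$ separates the points of $\D$.)
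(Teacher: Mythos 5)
Your argument is correct, but it takes a genuinely different route from the paper. The paper first proves the contractive inequality $\|f\circ\Ph\|_p\leq\|f\|_p$ (by reducing to the case $f(0)=0$) and then tests the isometry identity on the involution automorphism $\Ph_a$ interchanging $0$ and $a=\Ph(0)$, using the M\"obius invariance of the seminorm to obtain $|a|+\|\Ph_a\|_p\leq\|\Ph_a\|_p$ and hence $a=0$. You instead perturb an arbitrary $f$ by constants: since constants are annihilated by the seminorm and $\CO$ fixes them, the seminorm contributions cancel and you are left with $|f(\Ph(0))+\lambda|-|f(0)+\lambda|$ independent of $\lambda$, which (evaluating at $\lambda=-f(0)$ and $\lambda=-f(\Ph(0))$) forces $f(\Ph(0))=f(0)$ for every $f\in B_p$; testing with $f(z)=z$, which lies in $B_p$ precisely because $p>1$ makes the weight integrable, gives $\Ph(0)=0$. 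Your route is more elementary and more portable: it uses nothing about $B_p$ beyond the facts that the norm is $|f(0)|$ plus a seminorm killing constants and that the space separates $\Ph(0)$ from $0$, whereas the paper's proof leans on the M\"obius invariance of the Besov seminorm. What the paper's route buys is the intermediate inequality $\|f\circ\Ph\|_p\leq\|f\|_p$ for all $f$, though that also follows once $\Ph(0)=0$ is known. One small caveat: your closing parenthetical that ``any single nonconstant member of $B_p$'' would suffice is not quite right --- a particular nonconstant $f$ could still happen to satisfy $f(\Ph(0))=f(0)$ --- but the justification you actually invoke, that $B_p$ as a whole separates the points of $\D$, is the correct one.
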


\begin{proof}
To show that $\Ph(0)=0$, we first show that \begin{equation}\label{eqn:semisub}\|f\circ \Ph\|_p\leq\|f\|_p\end{equation} for all $f\in B_p$.  First suppose that $f\in B_p$ with $f(0)=0$.  Since $\CO$ is an isometry on $B_p$, we have $$|f(\Ph(0))|+\|f\circ\Ph\|_p=|f(0)|+\|f\|_p=\|f\|_p$$ and consequently $$\|f\circ\Ph\|_p=\|f\|_p-|f(\Ph(0)|\leq \|f\|_p.$$  If we now consider a general $f\in B_p$ with $f(0)=w_0$, it follows that the function $g(z)=f(z)-w_0$ is in $B_p$ and $g(0)=0$; hence $\|g\circ\Ph\|_p\leq \|g\|_p$.  Moreover, $\|g\|_p=\|f\|_p$ and $\|g\circ\Ph\|_p=\|f\circ\Ph\|_p$.  Thus $$\|f\circ\Ph\|_p=\|g\circ\Ph\|_p\leq \|g\|_p=\|f\|_p.$$

Now suppose that $\Ph(0)=a$ and let $\Ph_a$ be the involution automorphism of $\D$ which interchanges 0 and $a$. Next consider the map $\Ph_a\circ \Ph$.  It follows that $\Ph_a(\Ph(0))=0$ and $$|a|+\|\Ph_a\|_p=\|\Ph_a\|_{B_p}=\|\Ph_a\circ\Ph\|_{B_p}=\|\Ph_a\circ\Ph\|_p\leq \|\Ph_a\|_p$$ by (\ref{eqn:semisub}) and the fact that $\CO$ is an isometry.  Thus it must be the case that $a=0$.

To complete the proof, the equality $$|f(\Ph(0))|+\|f\circ\Ph\|_p=|f(0)|+\|f\|_p$$ together with the fact that $\Ph(0)=0$ implies that $$\|f\circ\Ph\|_p=\|f\|_p$$ for all $f\in B_p$ as desired.
\end{proof}

\begin{cor}\label{cor:automorphic}
Let $1<p<\infty$ and suppose $\Ph$ is an automorphism of $\D$. Then  $\CO$ is an isometry on $B_p$ if and only if $\Ph$ is a rotation of the disk.
\end{cor}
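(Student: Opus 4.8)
The plan is to dispatch the two implications separately, with the forward direction resting almost entirely on Lemma \ref{lem:seminorm}, so that essentially no new work is required.

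For the ``if'' direction, I would suppose $\Ph(z)=\lambda z$ with $|\lambda|=1$. Since a rotation is a conformal automorphism of $\D$, the M\"obius invariance of the Besov seminorm recalled in Section~2 gives $\|f\circ\Ph\|_p=\|f\|_p$ for every $f\in B_p$; one can also see this directly from the defining integral via the change of variables $w=\lambda z$, using $|\Ph'(z)|=1$ and $1-|\Ph(z)|^2=1-|z|^2$. Moreover $\Ph(0)=0$, so $|f(\Ph(0))|=|f(0)|$, and adding the two facts yields $\|\CO f\|_{B_p}=|f(\Ph(0))|+\|f\circ\Ph\|_p=|f(0)|+\|f\|_p=\|f\|_{B_p}$. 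Hence $\CO$ is an isometry.

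For the ``only if'' direction, I would assume $\CO$ is an isometry on $B_p$. Then Lemma \ref{lem:seminorm} applies (it requires only $1<p<\infty$) and gives $\Ph(0)=0$. Now every conformal automorphism of $\D$ has the form $\Ph(z)=e^{i\theta}\frac{a-z}{1-\bar a z}$ for some $a\in\D$ and $\theta\in\R$; the condition $\Ph(0)=0$ forces $a=0$, so $\Ph(z)=-e^{i\theta}z$, which is a rotation of the disk. (Alternatively, one applies the Schwarz lemma to both $\Ph$ and $\Ph^{-1}$, each of which fixes the origin, to conclude $|\Ph'(0)|=1$ and hence that $\Ph$ is a rotation.)

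Since Lemma \ref{lem:seminorm} has already done the substantive analytic work, there is no genuine obstacle here; the only point to be careful about is invoking the correct parametrization of $\mathrm{Aut}(\D)$ (or the two-sided Schwarz-lemma argument) to pass from ``automorphism fixing $0$'' to ``rotation,'' and making sure the easy ``if'' direction is recorded for completeness even though it is essentially immediate from M\"obius invariance.
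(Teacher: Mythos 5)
Your proof is correct and follows the route the paper intends: the corollary is stated without proof precisely because it is an immediate consequence of Lemma \ref{lem:seminorm} (which forces $\Ph(0)=0$, so an automorphism must be a rotation) together with the M\"obius invariance of the seminorm for the converse. No issues.
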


The next lemma is true on a variety of spaces and there are several proofs which can be given.  For our particular setting, one tactic is to use the fact that the adjoint of a surjective isometry is itself an isometry together with the fact that point evaluations are bounded linear functionals.  The proof we supply is of a different flavor but will provide the motivation for an interesting corollary.

\begin{lemma}\label{lem:ontounivalent}
Let $1<p<\infty$. If $\CO$ is a surjective isometry on $B_p$, then $\Ph$ is univalent.
\end{lemma}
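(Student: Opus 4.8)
The plan is to show that if $\CO$ is surjective, then $\CO$ has a bounded inverse on $B_p$, and use this together with Lemma~\ref{lem:seminorm} to force injectivity of $\Ph$. Since $\CO$ is a surjective isometry on the Banach space $B_p$, it is invertible with $\CO^{-1}$ also a (surjective) isometry; in particular $\CO^{-1}$ is bounded. The key observation is that $\CO^{-1}$ must itself be a composition operator: if $\CO$ is invertible then $\Ph$ must be a bijection of $\D$ in a suitable sense, but rather than argue that directly I would proceed more concretely via point evaluations, as the remark preceding the lemma suggests is possible, and instead carry out the ``different flavor'' argument that the authors promise motivates a corollary.

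Here is the approach I would actually take. Suppose toward a contradiction that $\Ph$ is not univalent, so there exist distinct points $z_1, z_2 \in \D$ with $\Ph(z_1) = \Ph(z_2) = w$. Every $f$ in the range of $\CO$ has the form $f \circ \Ph$, and such a function automatically satisfies $(f\circ\Ph)(z_1) = (f\circ\Ph)(z_2)$. Thus if $\CO$ is surjective, \emph{every} $g \in B_p$ satisfies $g(z_1) = g(z_2)$. But $B_p$ separates the points of $\D$: for instance, since $B_p$ contains all functions whose derivative lies in $A^p_{p-2}$, it contains the polynomials, and a linear polynomial $g(z) = z$ has $g(z_1) \neq g(z_2)$. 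This contradiction shows $\Ph$ is univalent. In fact this argument shows something stronger and suggests the corollary: the range of $\CO$ is contained in the closed subspace of functions constant on the fibers of $\Ph$, so surjectivity forces $\Ph$ to be injective, and more is true if we track how large that subspace is.

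I would present it in this order: (1) recall that $B_p$ separates points of $\D$, noting that polynomials belong to $B_p$ (equivalently, the identity function $z\mapsto z$ has $\|z\|_p^p = \int_\D (1-|z|^2)^{p-2}\,dA(z) < \infty$ since $p > 1$); (2) observe that any function in the range of $\CO$ is constant on fibers $\Ph^{-1}(w)$; (3) combine (1) and (2) to conclude injectivity of $\Ph$. No step here is a serious obstacle — the only thing to be slightly careful about is step (1), ensuring $z \mapsto z \in B_p$, which is immediate from $p>1$ making the weight $(1-|z|^2)^{p-2}$ integrable near the boundary. The argument does not even use that $\CO$ is an isometry, only that it is surjective; the isometry hypothesis (via Lemma~\ref{lem:seminorm}, giving $\Ph(0)=0$) will enter later when combining this univalence with the measure-theoretic identity~(\ref{eqn:intequality}) to pin down rotations.
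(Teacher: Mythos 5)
Your argument is correct and is essentially the paper's own proof: both proceed by contradiction from $\Ph(z_1)=\Ph(z_2)$, note that every function in the range of $\CO$ is constant on the fibers of $\Ph$, and use surjectivity to place a point-separating (univalent) function such as $z\mapsto z$ in that range. Your closing remark that only surjectivity of the range onto a set containing a univalent function is needed is exactly the observation the paper extracts as Corollary~\ref{cor:greater2reduction}.
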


\begin{proof}
Suppose to the contrary that $\Ph$ is not univalent.  Then there exist $a,b\in \D$ with $a\neq b$ and $\Ph(a)=\Ph(b)$.  Since $B^p$ contains univalent functions, it follows by the surjectivity of the operator that there exist functions $f, g\in B_p$ with $f\circ \Ph=g$, with $g$ univalent. However this implies that $g(a)=f(\Ph(a))=f(\Ph(b))=g(b)$ contradicting the fact that $g$ is univalent.
\end{proof}

The next result relates isometric composition operators on $B_p$ to isometric weighted composition operators on $A_{p-2}^p$. The connection between composition operators on $B_p$ and weighted composition operators on $A_{p-2}^p$ was first exploited in \cite{KS} to study boundedness and compactness of composition operators between $B_p$ and $B_q$.

\begin{lemma}\label{lem:weighted}
Let $1<p<\infty$ with $p\neq 2$.  If $\CO$ is an isometry on $B_p$, then the weighted composition operator $W_{\Ph',\Ph}$ is an isometry on $A_{p-2}^p$ and $\Ph$ is a full map of the disk, i.e. $A[\D\setminus\Ph(\D)]=0$.
\end{lemma}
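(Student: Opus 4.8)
The plan is to move the isometric identity off of $B_p$ and onto $A_{p-2}^p$ by differentiating. The bridge is the observation recorded in \eqref{eqn:norm}: an analytic function $f$ lies in $B_p$ exactly when $f'\in A_{p-2}^p$, and then $\|f\|_p=\|f'\|_{A_{p-2}^p}$. I would first point out that differentiation is a \emph{surjection} of $B_p$ onto $A_{p-2}^p$: given $g\in A_{p-2}^p$, the antiderivative $f(z)=\int_0^z g(\zeta)\,d\zeta$ is analytic on $\D$, satisfies $f'=g$, and has $\|f\|_p=\|g\|_{A_{p-2}^p}<\infty$, so $f\in B_p$.

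Next, the chain rule gives $(f\circ\Ph)'=(f'\circ\Ph)\,\Ph'=W_{\Ph',\Ph}(f')$ for every $f\in B_p$. Combining this with \eqref{eqn:norm} and Lemma \ref{lem:seminorm} yields, for all $f\in B_p$,
\[
\|W_{\Ph',\Ph}(f')\|_{A_{p-2}^p}=\|(f\circ\Ph)'\|_{A_{p-2}^p}=\|f\circ\Ph\|_p=\|f\|_p=\|f'\|_{A_{p-2}^p}.
\]
In particular $f\circ\Ph\in B_p$, so $W_{\Ph',\Ph}(f')=(f\circ\Ph)'\in A_{p-2}^p$; thus $W_{\Ph',\Ph}$ genuinely maps $A_{p-2}^p$ into itself, and since the derivatives $f'$ exhaust $A_{p-2}^p$, the displayed equality says precisely that $W_{\Ph',\Ph}$ is a linear isometry of $A_{p-2}^p$.

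Finally, I would invoke Theorem \ref{thm:kolaski} with $T=W_{\Ph',\Ph}$ and $\A=p-2$ (which is $>-1$, and $p\neq 2$ as required). Since $T1=\Ph'\cdot(1\circ\Ph)=\Ph'$, the theorem supplies an analytic map $\widetilde\Ph$ carrying $\D$ onto an open dense subset of $\D$ with $Tf=\Ph'\,(f\circ\widetilde\Ph)$ for all $f\in A_{p-2}^p$. Testing against $f(z)=z$ gives $\Ph'\,\Ph=\Ph'\,\widetilde\Ph$ on $\D$; and $\Ph$ cannot be constant (if it were, Lemma \ref{lem:seminorm} would force $\Ph\equiv 0$, so $\CO f=f(0)$ would be constant with $\|\CO f\|_{B_p}=|f(0)|<\|f\|_{B_p}$ for non-constant $f$), so $\Ph'$ is not identically zero and hence $\Ph=\widetilde\Ph$. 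Thus $\Ph$ is exactly the symbol produced by Kolaski's theorem, and, as noted in the second observation following that theorem, its proof in fact shows $A[\D\setminus\Ph(\D)]=0$, i.e. $\Ph$ is a full map of the disk.

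The argument is short; the place that warrants care is the bookkeeping in the second paragraph, where one must confirm not merely that $W_{\Ph',\Ph}$ preserves norms on the range of differentiation but that it actually returns $A_{p-2}^p$ to itself — this is precisely where $f\circ\Ph\in B_p$ (from Lemma \ref{lem:seminorm}) is used. The identification of the Kolaski symbol with $\Ph$ is routine but should not be skipped, since Theorem \ref{thm:kolaski} asserts only the existence of \emph{some} such symbol.
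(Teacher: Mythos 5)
Your proof is correct and follows essentially the same route as the paper: differentiate, use Eqn.~\eqref{eqn:norm} together with Lemma~\ref{lem:seminorm} and the surjectivity of differentiation onto $A_{p-2}^p$ to get the isometry, then appeal to the fullness observation attached to Theorem~\ref{thm:kolaski}. Your explicit verification that the Kolaski symbol coincides with $\Ph$ (via $\Ph'\Ph=\Ph'\widetilde\Ph$ and the nonconstancy of $\Ph$) is a detail the paper leaves implicit, and it is a worthwhile addition.
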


\begin{proof}
Let $f\in A_{p-2}^p$ and choose $g\in B_p$ with $g'=f$ (any antiderivative will suffice). With this choice it is clear that $\|g\|_p=\|f\|_{A_{p-2}^p}$ by Eqn. (\ref{eqn:norm}).  Applying Lemma \ref{lem:seminorm},
$$\|W_{\Ph',\Ph}f\|_{A_{p-2}^p}^p=\|\Ph'(f\circ\Ph)\|_{A_{p-2}^p}^p=\|(g\circ\Ph)'\|_{A_{p-2}^p}^p=\|g\circ\Ph\|_p^p=\|g\|_p^p=\|f\|_{A_{p-2}^p}^p$$ which verifies that $W_{\Ph',\Ph}$ is an isometry on $A_{p-2}^p.$

The second conclusion now follows from the observation made at the end of Section 2 concerning the proof of Theorem \ref{thm:kolaski}.
\end{proof}

\subsection{$B_p$ with $1<p<2$}

For $\Ph$ an analytic self-map of $\D$ and $w\in \D$, we let $\{z_j(w)\}$ denote the sequence of zeros of the function $\Ph(z)-w$ and let $n_{\Ph}(w)$ denote the cardinality of the set $\Ph^{-1}(\{w\}).$ In \cite{HJMAPS}, the authors use $n_{\Ph}$ in their study of isometries on the Bergman spaces.  This function is lower semi-continuous for analytic maps by Rouche's Theorem, and is therefore measurable with respect to $A$.

\begin{prop}\label{prop:less2univalent}
Let $1<p<2$. If $\CO$ is an isometry on $B_p$, then $\Ph$ is univalent in $\D$.
\end{prop}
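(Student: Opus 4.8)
The plan is to use the rigidity in Kolaski's theorem not merely as an abstract statement but through its measure identity, converting the isometry hypothesis into a pointwise equation on the fibers of $\Ph$, and then to extract univalence from the Schwarz--Pick lemma. The one feature that makes this run is that $p-2<0$.

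First I would invoke Lemma~\ref{lem:weighted}: since $\CO$ is an isometry on $B_p$, the operator $W_{\Ph',\Ph}$ is an isometry on $A_{p-2}^p$ and $\Ph$ is a full map. Applying Theorem~\ref{thm:kolaski} to $T=W_{\Ph',\Ph}$ (so that $\psi=T1=\Ph'$), one must first note that the symbol the theorem produces coincides with $\Ph$: comparing $Tf=\Ph'(f\circ\Ph)$ with Kolaski's representation and testing on the identity function (which lies in $A_{p-2}^p$ since $p>1$) shows the two symbols agree off the zero set of $\Ph'$, hence everywhere. Consequently (\ref{eqn:intequality}) with $\A=p-2$ becomes
\[
\int_{\Ph^{-1}(E)}|\Ph'(z)|^p(1-|z|^2)^{p-2}\,dA(z)=\int_E(1-|w|^2)^{p-2}\,dA(w)
\]
for every Borel set $E\subseteq\D$; taking $E=\D$ also records that the left-hand integral is finite.

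Next I would bring in the change-of-variables formula for analytic maps: for every Borel $u\geq 0$,
\[
\int_\D u(z)|\Ph'(z)|^2\,dA(z)=\int_\D\Bigl(\sum_{z\in\Ph^{-1}(\{w\})}u(z)\Bigr)dA(w),
\]
the inner sum taken set-theoretically (critical values of $\Ph$ form a null set and do not affect the identity). Writing $|\Ph'(z)|^p(1-|z|^2)^{p-2}=|\Ph'(z)|^2\cdot|\Ph'(z)|^{p-2}(1-|z|^2)^{p-2}$, applying the formula with $u(z)=\chi_E(\Ph(z))\,|\Ph'(z)|^{p-2}(1-|z|^2)^{p-2}$, and using that $z\mapsto\chi_E(\Ph(z))$ is constant on each fiber, a comparison with the previous display gives, for a.e.\ $w\in\D$,
\[
\sum_{z\in\Ph^{-1}(\{w\})}|\Ph'(z)|^{p-2}(1-|z|^2)^{p-2}=(1-|w|^2)^{p-2}.
\]
Substituting $w=\Ph(z)$ in each summand and dividing by $(1-|w|^2)^{p-2}$, this says precisely that
\[
\sum_{z\in\Ph^{-1}(\{w\})}\tau_\Ph(z)^{\,p-2}=1
\]
for a.e.\ $w\in\D$, where $\tau_\Ph(z)=\frac{|\Ph'(z)|(1-|z|^2)}{1-|\Ph(z)|^2}$ is the Schwarz--Pick quantity.

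Finally I would invoke the Schwarz--Pick lemma, which gives $0<\tau_\Ph(z)\leq 1$ at every point of such a fiber; since $1<p<2$ the exponent $p-2$ is negative, so $t^{p-2}\geq 1$ for all $t\in(0,1]$, and hence every summand above is $\geq 1$. The identity can therefore hold only if $\Ph^{-1}(\{w\})$ is a single point for a.e.\ $w$, i.e.\ $n_\Ph(w)\leq 1$ for a.e.\ $w\in\D$; univalence follows because, were $\Ph(a)=\Ph(b)$ with $a\neq b$, the open mapping theorem would make $\Ph$ carry disjoint neighborhoods of $a$ and $b$ onto a common open subset of $\D$, forcing $n_\Ph\geq 2$ on a set of positive area. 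I expect the only point needing genuine care to be the change-of-variables step in this weighted, a priori non-injective setting: one must ensure the fiber sums are finite for a.e.\ $w$ (which is ultimately guaranteed by finiteness of the total integral together with the pointwise bound $\tau_\Ph(z)^{p-2}\geq 1$) and that critical values contribute nothing. The inequality $t^{p-2}\geq 1$ is the engine of the argument, and this is exactly where $p<2$ is used: for $p>2$ one has $t^{p-2}\leq 1$ instead and the reasoning collapses, consistent with univalence being the harder case there. I would also note that the fiber identity forces the surviving summand to equal $1$, hence $\tau_\Ph\equiv 1$, so that $\Ph$ is an automorphism of $\D$; combined with $\Ph(0)=0$ from Lemma~\ref{lem:seminorm}, the argument in fact yields the stronger conclusion that $\Ph$ is a rotation.
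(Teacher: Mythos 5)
Your proof is correct, but it takes a genuinely different route from the paper's. The paper proves $n_{\Ph}(w)=1$ a.e.\ by a single global integral comparison: it tests the semi-norm identity of Lemma \ref{lem:seminorm} on $f(z)=z$, applies Schwarz--Pick under the integral (using $p-2<0$), changes variables to get $\int_{\D} n_{\Ph}(w)(1-|w|^2)^{p-2}\,dA(w)\leq \int_{\D}(1-|w|^2)^{p-2}\,dA(w)$, and then uses the full-map property ($n_{\Ph}\geq 1$ a.e.) to force $A(\{n_{\Ph}>1\})=0$. You instead invoke the full strength of Kolaski's measure identity (\ref{eqn:intequality}) for all Borel sets $E$, disintegrate it via the nonunivalent change-of-variables formula, and obtain the pointwise fiber identity $\sum_{z\in\Ph^{-1}(\{w\})}\tau_{\Ph}(z)^{p-2}=1$ a.e., from which the conclusion falls out since each summand is $\geq 1$. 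Both arguments pivot on exactly the same engine --- $t^{p-2}\geq 1$ for $t\in(0,1]$ when $p<2$ --- but the trade-off is real: the paper's version is more elementary (it needs only the full-map conclusion of Lemma \ref{lem:weighted}, not the measure identity, and only the generic form of the change of variables with an integrand constant on fibers), whereas your localized version is stronger, since the fiber identity forces the surviving summand to equal $1$, giving $\tau_{\Ph}\equiv 1$ at a point and hence, by the equality case of Schwarz--Pick and $\Ph(0)=0$, that $\Ph$ is a rotation outright. In other words, your argument subsumes Theorem \ref{thm:less2isometry} in one pass, at the cost of the extra bookkeeping you correctly flag (identifying Kolaski's symbol with $\Ph$, discarding critical values, and justifying the area formula for an integrand that is not constant on fibers).
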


\begin{proof}
Our first goal will be to show that $n_{\Ph}(w)=1$ a.e. on $\D$. First note that Lemma \ref{lem:weighted} implies that $n_{\Ph}(w)\geq 1$ a.e. on $\D$. Let $B$ denote the set $$B=\{w\in \D: n_{\Ph}(w)>1\};$$ note that $B$ is a measurable set since $n_{\Ph}$ is a measurable function and that $n_{\Ph}(w)=1$ a.e. on $\D\setminus B$.  Let $f(z)=z$.  By Lemma \ref{lem:seminorm}, the Schwarz-Pick Lemma (with $p-2<0$), the change of variable $w=\Ph(z)$ (see \cite{CMC} Theorem 2.32), and Lemma \ref{lem:weighted}, we have $$\begin{aligned}\|f\|_p^p=\|\Ph\|_p^p&=\int_{\D}|\Ph'(z)|^p(1-|z|^2)^{p-2}\,dA(z)\\
&\geq\int_{\D}|\Ph'(z)|^2(1-|\Ph(z)|^2)^{p-2}\,dA(z)\\
&=\int_{\Ph(\D)}\sum_{j\geq 1}(1-|\Ph(z_j(w))|^2)^{p-2}\,dA(w)\\
&=\int_{\D}\sum_{j\geq 1}(1-|w|^2)^{p-2}\,dA(w)\\
&=\int_{\D\setminus B}(1-|w|^2)^{p-2}\,dA(w)+\int_B\sum_{j\geq 1}(1-|w|^2)^{p-2}\,dA(w)\\
&\geq \int_{\D\setminus B}(1-|w|^2)^{p-2}\,dA(w)+2\int_B(1-|w|^2)^{p-2}\,dA(w)\\
&=\int_{\D}(1-|w|^2)^{p-2}\,dA(w)+\int_B(1-|w|^2)^{p-2}\,dA(w)\\
&=\|f\|_p^p+\int_B(1-|w|^2)^{p-2}\,dA(w).\end{aligned}$$  From this it is evident that $A(B)=0$ and thus $n_{\Ph}(w)=1$ a.e. on $\D$.

The univalence of $\Ph$ is now a consequence of the fact that univalent functions are open maps.  A proof can be found in the proof of the main theorem in \cite{MVD}.
\end{proof}

\begin{theorem}\label{thm:less2isometry}
Let $1<p<2$. Then $\CO$ is an isometry on $B_p$ if and only if $\Ph$ is a rotation of the disk.
\end{theorem}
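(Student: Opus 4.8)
The plan is as follows. The backward implication is immediate: a rotation is an automorphism of $\D$ fixing the origin, so by M\"obius invariance of the Besov semi-norm (and since the value $f(0)$ is unchanged) $\CO$ is an isometry; this is also recorded in Corollary \ref{cor:automorphic}. So the content is the forward implication, and the idea is to revisit the chain of inequalities in the proof of Proposition \ref{prop:less2univalent} and observe that, since its two extremes are both equal to $\|f\|_p^p$ for $f(z)=z$, \emph{every} inequality in that chain must in fact be an equality --- in particular the Schwarz--Pick step.

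Concretely, I would first record from Lemma \ref{lem:seminorm} that $\Ph(0)=0$ and $\|g\circ\Ph\|_p=\|g\|_p$ for all $g\in B_p$, from Proposition \ref{prop:less2univalent} that $\Ph$ is univalent (hence $\Ph'$ is zero-free on $\D$), and from Lemma \ref{lem:weighted} that $\Ph$ is a full map. Taking $f(z)=z$ we have $\|f\|_p^p=\int_\D(1-|z|^2)^{p-2}\,dA(z)<\infty$ (finite since $p>1$), and $\|\Ph\|_p=\|f\|_p$. Since $p-2<0$, the Schwarz--Pick Lemma gives
\[
\|f\|_p^p=\int_\D|\Ph'(z)|^p(1-|z|^2)^{p-2}\,dA(z)\ \ge\ \int_\D|\Ph'(z)|^2\bigl(1-|\Ph(z)|^2\bigr)^{p-2}\,dA(z),
\]
while the change of variables $w=\Ph(z)$ (with Jacobian $|\Ph'(z)|^2$; cf. \cite{CMC}, Theorem 2.32), valid because $\Ph$ is a univalent map with zero-free derivative, together with fullness of $\Ph$, gives
\[
\int_\D|\Ph'(z)|^2\bigl(1-|\Ph(z)|^2\bigr)^{p-2}\,dA(z)=\int_{\Ph(\D)}(1-|w|^2)^{p-2}\,dA(w)=\int_\D(1-|w|^2)^{p-2}\,dA(w)=\|f\|_p^p.
\]
Hence the displayed inequality is an equality. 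From here I would extract pointwise information: writing $|\Ph'(z)|^p(1-|z|^2)^{p-2}=|\Ph'(z)|^2\bigl(|\Ph'(z)|(1-|z|^2)\bigr)^{p-2}$, the equality above reads
\[
\int_\D|\Ph'(z)|^2\Bigl[\bigl(|\Ph'(z)|(1-|z|^2)\bigr)^{p-2}-\bigl(1-|\Ph(z)|^2\bigr)^{p-2}\Bigr]\,dA(z)=0 .
\]
By the Schwarz--Pick Lemma $|\Ph'(z)|(1-|z|^2)\le 1-|\Ph(z)|^2$, and since $t\mapsto t^{p-2}$ is strictly decreasing on $(0,\infty)$ the bracket is nonnegative; as $\Ph'$ never vanishes, the integrand is zero only where $|\Ph'(z)|(1-|z|^2)=1-|\Ph(z)|^2$. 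Thus that identity holds a.e. on $\D$, and by continuity of both sides it holds for every $z\in\D$. This is exactly the equality case of the Schwarz--Pick Lemma, so $\Ph$ is a conformal automorphism of $\D$; since $\CO$ is an isometry, Corollary \ref{cor:automorphic} then forces $\Ph$ to be a rotation, completing the proof.

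An alternative to the Schwarz--Pick rigidity step would route through Kolaski's theorem: Lemma \ref{lem:weighted} makes $W_{\Ph',\Ph}$ an isometry on $A_{p-2}^p$ with $W_{\Ph',\Ph}1=\Ph'$, so Theorem \ref{thm:kolaski} (valid since $p\neq 2$ and $p-2>-1$) provides a full self-map $\tilde\Ph$ with $W_{\Ph',\Ph}f=\Ph'(f\circ\tilde\Ph)$; testing on $f(w)=w$ and using that $\Ph'$ is zero-free gives $\tilde\Ph=\Ph$, and then the change-of-variables computation applied to (\ref{eqn:intequality}) with $\A=p-2$ again yields $|\Ph'(z)|(1-|z|^2)=1-|\Ph(z)|^2$. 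The main point needing care in either route is the change of variables --- legitimate precisely because univalence makes $\Ph$ a biholomorphism onto $\Ph(\D)$ and fullness discards the null set $\D\setminus\Ph(\D)$ --- together with the passage from an a.e.\ identity to the genuine equality case of Schwarz--Pick via continuity; neither of these is a serious obstacle, so the bulk of the work in the $1<p<2$ case already lies in the earlier results, Proposition \ref{prop:less2univalent} and Lemma \ref{lem:weighted}.
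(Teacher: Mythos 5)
Your argument is correct and is essentially identical to the paper's proof: both take $f(z)=z$, combine Lemma \ref{lem:seminorm}, Proposition \ref{prop:less2univalent}, and Lemma \ref{lem:weighted} to justify the univalent change of variables and fullness, force equality in the Schwarz--Pick step, and conclude via the rigidity case of Schwarz--Pick together with $\Ph(0)=0$ that $\Ph$ is a rotation. The alternative route through Kolaski's theorem that you sketch is a reasonable variant but is not needed.
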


\begin{proof}
The fact that any rotation of the disk induces an isometric composition operator on $B_p$ is clear.  We therefore assume that $\CO$ is an isometry on $B_p$ and again consider the function $f(z)=z$. By Lemma \ref{lem:weighted} and Proposition \ref{prop:less2univalent} we have that $\Ph$ is a univalent, full map of the disk.  Using a calculation completely analogous to that used in the proof of the previous proposition for a univalent change of variable, we have
$$\begin{aligned}\|f\|_p^p=\|\Ph\|_p^p&=\int_{\D}|\Ph'(z)|^p(1-|z|^2)^{p-2}\,dA(z)\\
&\geq \int_{\D}|\Ph'(z)|^2(1-|\Ph(z)|^2)^{p-2}\,dA(z)\\
&=\int_{\Ph(\D)}(1-|w|^2)^{p-2}\,dA(w)\\
&=\int_{\D}(1-|w|^2)^{p-2}\,dA(w)\\
&=\|f\|_p^p
\end{aligned}$$ which implies that equality must hold throughout.  Specifically, the inequality in the second line must be an equality, or equivalently $$\int_{\D}\left(|\Ph'(z)|^{p-2}(1-|z|^2)^{p-2}-(1-|\Ph(z)|^2)^{p-2}\right)|\Ph'(z)|^2\,dA(z) =0.$$  The univalence of $\Ph$ guarantees us that $\Ph'$ does not vanish in $\D$.  Furthermore, since this integrand is non-negative by the Schwarz-Pick Lemma, it must be the case that $$|\Ph'(z)|^{p-2}(1-|z|^2)^{p-2}-(1-|\Ph(z)|^2)^{p-2}=0$$ in $\D$ or $$|\Ph'(z)|(1-|z|^2)=1-|\Ph(z)|^2$$ in $\D$.  Therefore $\Ph$ must be a disk automorphism, again by the Schwarz-Pick Lemma.  The fact that $\Ph(0)=0$ allows us to conclude that $\Ph$ must be a rotation of the disk.
\end{proof}

\subsection{$B_p$ with $p>2$}

For the case of surjective isometries, we have the following.

\begin{prop}\label{prop:greater2surjective}
Let $p>2$. Then $\CO$ is a surjective isometry on $B_p$ if and only if $\Ph$ is a rotation of the disk.
\end{prop}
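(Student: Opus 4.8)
The plan is to leverage the work already done for surjective isometries together with the two previous results in this section to reduce to an automorphism. First I would invoke Lemma \ref{lem:ontounivalent} to conclude that $\Ph$ is univalent, and Lemma \ref{lem:weighted} to conclude that $\Ph$ is a full map of the disk; thus $\Ph(\D)$ is an open, dense subset of $\D$ whose complement has area zero. The key additional ingredient in the surjective case is that $\CO$ being onto $B_p$ forces $\Ph$ to in fact be \emph{all} of $\D$, i.e.\ an automorphism: given any target function, in particular the identity $f(z)=z$, there must be some $g\in B_p$ with $g\circ\Ph = \mathrm{id}$, which means $\Ph$ has an analytic left inverse on its image and is injective with $\Ph(\D)$ necessarily equal to $\D$ (if some $w_0\in\D$ were omitted, $g$ would fail to be defined appropriately, or one argues that an onto composition operator forces $\Ph^{-1}$ to exist as a self-map of $\D$). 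Once $\Ph$ is an automorphism, Corollary \ref{cor:automorphic} immediately gives that $\Ph$ is a rotation.

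More carefully, here is the order of steps I would carry out. Step one: the ``if'' direction is trivial since rotations are automorphisms and $B_p$ is M\"obius invariant with $\Ph(0)=0$, so $\CO$ is a surjective isometry. Step two: assume $\CO$ is a surjective isometry; by Lemma \ref{lem:ontounivalent}, $\Ph$ is univalent. Step three: show $\Ph$ maps $\D$ onto $\D$. The cleanest route is to observe that since $\CO$ is surjective and isometric, its inverse $\CO^{-1}$ is also a bounded (indeed isometric) operator on $B_p$; applying $\CO^{-1}$ to a rich enough supply of functions (for instance the coordinate functions, or the functions $k_w$ that separate points) and tracking where $\CO^{-1}f$ takes its values forces $\Ph$ to be onto. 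Alternatively, and perhaps more in the spirit of this paper, one combines univalence of $\Ph$ with the full-map property from Lemma \ref{lem:weighted}: a univalent analytic self-map of $\D$ whose image is dense with complement of measure zero, and which additionally induces a \emph{surjective} composition operator, must have image all of $\D$ --- if $w_0\notin\Ph(\D)$, then $\Ph(\D)$ is a proper simply connected subdomain, and no $g\in B_p$ composed with $\Ph$ can equal, say, a function whose behavior near $w_0$ is prescribed, contradicting surjectivity. Step four: $\Ph$ univalent and onto $\D$ makes $\Ph$ an automorphism of $\D$, so by Corollary \ref{cor:automorphic}, $\Ph$ is a rotation.

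The main obstacle I anticipate is Step three: rigorously upgrading ``$\Ph$ is univalent and full'' (plus surjectivity of the operator) to ``$\Ph(\D)=\D$.'' Fullness only says the omitted set has area zero, which by itself does not prevent $\Ph(\D)$ from being, e.g., a slit disk or a disk minus a point. The surjectivity of $\CO$ is what must be used to rule this out, and the argument needs to be phrased so that it genuinely uses membership of the preimages in $B_p$ --- one wants to exhibit a specific $g\in B_p$ that provably has no preimage under $\CO$ when $\Ph(\D)\subsetneq\D$. A convenient choice is to take $h(z)=z$; if $g\in B_p$ satisfies $g\circ\Ph(z)=z$ on $\D$, then $g$ is a left inverse for $\Ph$, forcing $\Ph$ to be injective (already known) but also that $\Ph$ is an open map onto a domain on which $g$ inverts it; composing the other way, $\Ph\circ g$ is the identity on $\Ph(\D)$, and analytic continuation together with $g(\D)\subseteq\D$ forces $\Ph(\D)$ to be all of $\D$. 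I would spell out this continuation argument as the crux of the proof, since once $\Ph\in\mathrm{Aut}(\D)$ everything else is immediate from Corollary \ref{cor:automorphic}.
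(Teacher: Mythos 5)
Your argument is essentially correct, but it is a genuinely different route from the paper's. The paper disposes of this proposition in one line: Lemma \ref{lem:ontounivalent} gives univalence, and then one repeats the integral computation of Theorem \ref{thm:less2isometry} with the Schwarz--Pick inequality reversed (since $p-2>0$ the pointwise inequality $|\Ph'(z)|^p(1-|z|^2)^{p-2}\leq |\Ph'(z)|^2(1-|\Ph(z)|^2)^{p-2}$ now points the other way), using univalence and fullness to evaluate $\int_{\Ph(\D)}(1-|w|^2)^{p-2}\,dA(w)=\|f\|_p^p$; equality forces $|\Ph'(z)|(1-|z|^2)=1-|\Ph(z)|^2$, hence $\Ph$ is an automorphism fixing $0$, hence a rotation. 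You instead use surjectivity a second time to produce $g\in B_p$ with $g\circ\Ph=\mathrm{id}$ and upgrade $\Ph$ to an automorphism by analytic continuation, then quote Corollary \ref{cor:automorphic}. This works, and it avoids the change-of-variable computation entirely, but be aware of two things. First, your initial suggestions for Step three (applying $\CO^{-1}$ to a ``rich supply'' of functions, or arguing that no $g$ can match ``prescribed behavior near $w_0$'') are too vague to stand as proofs; only the left-inverse continuation argument you settle on is solid. Second, that argument has a step you assert rather than prove: $g(\D)\subseteq\D$, without which $\Ph\circ g$ is not defined on all of $\D$. This is where Lemma \ref{lem:weighted} (hence Kolaski's theorem) is genuinely needed: fullness makes $\Ph(\D)$ dense, on $\Ph(\D)$ one has $g=\Ph^{-1}$ so $|g|<1$ there, and density plus the maximum modulus principle give $|g|<1$ on all of $\D$ (the example $\Ph(z)=z/2$, $g(w)=2w$ shows a left inverse alone does not suffice). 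With that detail supplied, the identity theorem gives $\Ph\circ g=\mathrm{id}$ on $\D$, so $\Ph$ is onto and your proof closes. Net comparison: the paper's route is shorter given the machinery already in place and is what generalizes to the non-surjective partial results of Theorem \ref{thm:greater2isometry}; yours isolates more cleanly why surjectivity forces $\Ph$ to be an automorphism and would apply verbatim for $1<p<2$ as well.
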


This result follows immediately from Lemma \ref{lem:ontounivalent} and a computation analogous to that of Theorem \ref{thm:less2isometry}; see also \cite{MVB} Theorem 1.4.  The proof of Lemma \ref{lem:ontounivalent} also leads to a reduction in the general case.

\begin{cor}\label{cor:greater2reduction}
Let $p>2$ and suppose that $\CO$ is an isometry on $B_p$.  If the range of $\CO$ contains a univalent function, then $\Ph$ is a rotation of the disk.
\end{cor}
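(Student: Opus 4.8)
The plan is to first upgrade the hypothesis into univalence of $\Ph$ by reusing the argument of Lemma~\ref{lem:ontounivalent}, and then to rerun the computation behind Proposition~\ref{prop:greater2surjective} (equivalently, the analogue of Theorem~\ref{thm:less2isometry} with the Schwarz--Pick inequality oriented for $p>2$). By hypothesis there is some $f\in B_p$ with $g:=\CO f=f\circ\Ph$ univalent. If $\Ph$ were not univalent we could pick $a\neq b$ in $\D$ with $\Ph(a)=\Ph(b)$, and then $g(a)=f(\Ph(a))=f(\Ph(b))=g(b)$, contradicting the univalence of $g$. Hence $\Ph$ is univalent; note this uses only a single univalent function in the range, not surjectivity of $\CO$.

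With univalence in hand, Lemma~\ref{lem:weighted} shows $\Ph$ is a full map of $\D$, i.e.\ $A[\D\setminus\Ph(\D)]=0$, and Lemma~\ref{lem:seminorm} gives $\Ph(0)=0$ together with $\|f_0\circ\Ph\|_p=\|f_0\|_p$ for the test function $f_0(z)=z$. Since $p-2>0$, the Schwarz--Pick Lemma $|\Ph'(z)|(1-|z|^2)\leq 1-|\Ph(z)|^2$ now yields
$$|\Ph'(z)|^p(1-|z|^2)^{p-2}=|\Ph'(z)|^2\bigl(|\Ph'(z)|(1-|z|^2)\bigr)^{p-2}\leq |\Ph'(z)|^2(1-|\Ph(z)|^2)^{p-2},$$
so, integrating and then using the univalent change of variables $w=\Ph(z)$ (see \cite{CMC} Theorem 2.32) together with fullness of $\Ph$ to replace $\Ph(\D)$ by $\D$,
$$\|f_0\|_p^p=\|\Ph\|_p^p=\int_{\D}|\Ph'(z)|^p(1-|z|^2)^{p-2}\,dA(z)\leq\int_{\D}|\Ph'(z)|^2(1-|\Ph(z)|^2)^{p-2}\,dA(z)=\int_{\D}(1-|w|^2)^{p-2}\,dA(w)=\|f_0\|_p^p.$$
Thus equality holds throughout; in particular the nonnegative integrand $|\Ph'(z)|^2\bigl((1-|\Ph(z)|^2)^{p-2}-|\Ph'(z)|^{p-2}(1-|z|^2)^{p-2}\bigr)$ has zero integral, hence vanishes a.e. Since univalence of $\Ph$ forces $\Ph'$ to be zero-free, $(1-|\Ph(z)|^2)^{p-2}=|\Ph'(z)|^{p-2}(1-|z|^2)^{p-2}$ a.e., and by continuity everywhere, so $|\Ph'(z)|(1-|z|^2)=1-|\Ph(z)|^2$ on $\D$. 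The equality case of the Schwarz--Pick Lemma then makes $\Ph$ a disk automorphism, and $\Ph(0)=0$ makes it a rotation of the disk. The converse direction (rotations induce isometries) is immediate.

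I do not expect a serious obstacle beyond what is already in hand: the only genuinely new point is that a single univalent function in the range suffices to conclude $\Ph$ is univalent, after which the computation in the proof of Proposition~\ref{prop:greater2surjective}---which only uses univalence and fullness, not surjectivity---finishes the argument. The steps I would be most careful about are bookkeeping: checking that the change of variables $w=\Ph(z)$ is legitimate for a univalent map (so the Jacobian factor $|\Ph'(z)|^2$ enters with multiplicity one), that fullness lets us pass from $\int_{\Ph(\D)}$ to $\int_{\D}$, and that the passage from ``a.e.'' to ``everywhere'' in the Schwarz--Pick equality is justified by continuity of $z\mapsto|\Ph'(z)|(1-|z|^2)-(1-|\Ph(z)|^2)$ on $\D$.
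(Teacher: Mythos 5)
Your proposal is correct and follows exactly the route the paper intends: extract univalence of $\Ph$ from the single univalent function in the range via the argument of Lemma~\ref{lem:ontounivalent}, then run the Schwarz--Pick/change-of-variables computation of Proposition~\ref{prop:greater2surjective} (with the inequality reversed for $p-2>0$), using fullness from Lemma~\ref{lem:weighted} and $\Ph(0)=0$ from Lemma~\ref{lem:seminorm}. No gaps; the equality-case analysis and the passage from a.e.\ to everywhere are handled appropriately.
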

 
For a general isometry we are only able to obtain a partial result which relaxes the hypothesis of the analogous statement given in \cite{MVB}.

\begin{theorem}\label{thm:greater2isometry}
Let $p>2$ and suppose that $n_{\Ph}(w)=1$ a.e. in some neighborhood of the origin. Then $\CO$ is an isometry on $B_p$ if and only if $\Ph$ is a rotation of the disk.
\end{theorem}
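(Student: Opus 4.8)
The forward direction is clear (a rotation preserves $|f(0)|$ and, being M\"{o}bius, the semi-norm $\|\cdot\|_p$). For the converse the plan is to push Kolaski's theorem one step further than in Theorem~\ref{thm:less2isometry}: instead of using only equality of semi-norms, I would use the exact change-of-variables identity behind it, but restricted to the neighborhood supplied by the hypothesis. Suppose $\CO$ is an isometry on $B_p$. By Lemma~\ref{lem:seminorm}, $\Ph(0)=0$; by Lemma~\ref{lem:weighted}, $W_{\Ph',\Ph}$ is an isometry on $A_{p-2}^p$. Apply Theorem~\ref{thm:kolaski} to $T=W_{\Ph',\Ph}$ (valid since $p-2>-1$ and $p\neq 2$): since $T1=\Ph'$, we have $\psi=\Ph'$, and testing the conclusion $Tf=\psi(f\circ\widetilde\Ph)$ against $f\equiv 1$ and $f(z)=z$ identifies the symbol $\widetilde\Ph$ produced by the theorem with $\Ph$ itself (using that $\Ph$ is non-constant, so $\Ph'\not\equiv 0$). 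Hence Eqn.~(\ref{eqn:intequality}) holds with $\A=p-2$ and $\psi=\Ph'$: for every Borel set $E\subseteq\D$, $\int_{\Ph^{-1}(E)}|\Ph'(z)|^{p}(1-|z|^2)^{p-2}\,dA(z)=\int_{E}(1-|w|^2)^{p-2}\,dA(w)$.

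Let $U$ be a neighborhood of $0$ on which $n_{\Ph}=1$ a.e. Taking $E=U$ above and running the computations from the proofs of Proposition~\ref{prop:less2univalent} and Theorem~\ref{thm:less2isometry}, now on $U$ — the Schwarz--Pick Lemma (now with $p-2>0$, so $|\Ph'(z)|^{p}(1-|z|^2)^{p-2}\le|\Ph'(z)|^{2}(1-|\Ph(z)|^2)^{p-2}$), then the change of variable $w=\Ph(z)$ (see \cite{CMC} Theorem~2.32), then $n_{\Ph}=1$ a.e.\ on $U$ — gives
$$\int_{\Ph^{-1}(U)}|\Ph'|^{p}(1-|z|^2)^{p-2}\,dA
\le\int_{\Ph^{-1}(U)}|\Ph'|^{2}(1-|\Ph|^2)^{p-2}\,dA
=\int_{U}n_{\Ph}(w)(1-|w|^2)^{p-2}\,dA
=\int_{U}(1-|w|^2)^{p-2}\,dA,$$
while the left-hand side also equals $\int_{U}(1-|w|^2)^{p-2}\,dA$ by the identity above. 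Hence equality holds throughout, and the middle inequality being an equality forces
$$|\Ph'(z)|^{2}\Big(|\Ph'(z)|^{p-2}(1-|z|^2)^{p-2}-(1-|\Ph(z)|^2)^{p-2}\Big)=0$$
for a.e.\ $z$ in the open set $\Ph^{-1}(U)$, which contains $0$.

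To finish I would upgrade this to equality in the Schwarz--Pick Lemma at an interior point. As $\Ph$ is non-constant, $\Ph'$ vanishes only on a discrete set, so the bracketed factor is $0$ a.e.\ on the nonempty open set $\Ph^{-1}(U)\setminus\{\Ph'=0\}$, hence everywhere on it by continuity; equivalently $|\Ph'(z)|(1-|z|^2)=1-|\Ph(z)|^2$ there. The equality case of the Schwarz--Pick Lemma, exactly as invoked at the end of the proof of Theorem~\ref{thm:less2isometry}, then makes $\Ph$ an automorphism of $\D$, and $\Ph(0)=0$ forces it to be a rotation. (Alternatively, the hypothesis already rules out $\Ph'(0)=0$ — otherwise $\Ph$ would be at least two-to-one near $0$, contradicting $n_{\Ph}=1$ a.e.\ near $0$ — so the identity holds at $z=0$, whence $|\Ph'(0)|=1$, and Schwarz's Lemma gives $\Ph(z)=e^{i\theta}z$.)

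I expect the one substantive point to be the localization: recognizing that Theorem~\ref{thm:kolaski} delivers the \emph{set-by-set} identity (\ref{eqn:intequality}) and not merely $\|f\circ\Ph\|_p=\|f\|_p$, so that the computation of Theorem~\ref{thm:less2isometry} can be carried out on $U$ alone, where $n_{\Ph}$ is controlled. The rest is routine — chaining the (in)equalities, and promoting an a.e.\ identity to a pointwise one away from the zeros of $\Ph'$ — and it is exactly the inequality $p>2$ that orients the Schwarz--Pick estimate in the needed direction and keeps $|\Ph'|^{p-2}$ vanishing (rather than blowing up) at critical points; the range $1<p<2$ reverses the orientation and is treated instead via the stronger global conclusion $n_{\Ph}=1$ a.e.\ of Proposition~\ref{prop:less2univalent}.
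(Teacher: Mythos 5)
Your proof is correct and follows essentially the same route as the paper: localize Kolaski's identity (\ref{eqn:intequality}) to the neighborhood $U$ where $n_{\Ph}=1$ a.e., compare against the Schwarz--Pick estimate (oriented by $p>2$) and the multiplicity-counting change of variables, and force equality in Schwarz--Pick to conclude that $\Ph$ is a rotation. The only difference is cosmetic: the paper takes the Borel set $E=\Ph(D(0,r))$ with $D(0,r)$ chosen so that $\Ph(D(0,r))\subseteq U$ and inverts $\Ph$ there, whereas you take $E=U$ and integrate over the full preimage $\Ph^{-1}(U)$ --- a minor (and, if anything, slightly more careful) variant of the same computation.
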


\begin{proof}
If $\Ph$ is a rotation of the disk, then it is clear that $\CO$ is an isometry on $B_p$.  Assuming that $n_{\Ph}(w)=1$ a.e. in some neighborhood of the origin it follows that there is a neighborhood $U$ of $\Ph(0)=0$ such that $n_{\Ph}(w)=1$ a.e. on $U$.  Since $\Ph$ is continuous, $\Ph^{-1}(U)$ is an open set and thus there is an $0<r<1$ such that $\Ph(D(0,r))\subseteq U$, where $D(0,r)$ is the Euclidean disk centered at the origin with radius $r$. Set $E=\Ph(D(0,r))$; then $\Ph^{-1}$ is well-defined on $E$ as a single-valued inverse mapping and $\Ph^{-1}(E)=D(0,r)$.  By Lemma \ref{lem:weighted} and Eqn. (\ref{eqn:intequality}) we have (by techniques similar to the previous theorem) $$\begin{aligned}\int_{E}(1-|z|^2)^{p-2}\,dA(z)&=\int_{D(0,r)}|\Ph'(z)|^p(1-|z|^2)^{p-2}\,dA(z)\\
&\leq\int_{D(0,r)}|\Ph'(z)|^2(1-|\Ph(z)|^2)^{p-2}\,dA(z)\\
&= \int_{E}(1-|w|^2)^{p-2}\,dA(w).
\end{aligned}$$  This computation implies that equality must hold throughout and as in the proof of Theorem \ref{thm:less2isometry}, it must be the case that $\Ph$ is a rotation of the disk.
\end{proof}

We conjecture that every isometric composition operator on $B_p$ with $p>2$ must be induced by a rotation of the disk but the techniques we have employed seem unable to support this claim. Perhaps more sophisticated methods such as those used in \cite{HJMAPS} will prove more fruitful.

\section{Isometric Composition Operators on $B_p$ with an alternate norm}

As with many spaces of analytic functions, there are several equivalent norms for the analytic Besov spaces.  In this section we will classify the isometric composition operators with respect to one of these equivalent norms.  For the remainder of this section we fix $n\geq 2$.  In \cite{KZB} Zhu shows that $f\in B_p$ if and only if $$\int_\D|f^{(n)}(z)|^p(1-|z|^2)^{np-2}\,dA(z)<\infty;$$ in other words $f\in B_p$ if and only if the $n^{th}$ derivative of $f$ is in $ A_{np-2}^p$.  A norm equivalent to the one provided in Section 2 is obtain by setting \begin{equation}\label{eqn:equivnorm}\|f\|=\sum_{k=0}^{n-1}|f^{(k)}(0)|+\|f^{(n)}\|_{A_{np-2}^2}.\end{equation}

In \cite{HJ} the authors studied general isometries on the Besov spaces equipped with the norm just defined. There they proved the following result.

\begin{prop}[\cite{HJ} Corollary 2.1]\label{prop:permutation}
Let $1<p<\infty$ with $p\neq 2$ and let $n\geq 2$. If $T:B_p\rightarrow B_p$  is an isometry, then there exists a permutation $\pi$ of the set $\{0,1,2,\ldots, n-1\}$ and for each $k\in\{0,1,2,\ldots, n-1\}$ there is a unimodular constant $\lambda_k$ such that $$T\left(\frac{z^k}{k!}\right)=\lambda_k\frac{z^{\pi(k)}}{\pi(k)!}.$$
\end{prop}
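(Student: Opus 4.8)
The plan is to recast the statement as a question about the $L$-summand structure of an $\ell^{1}$-direct sum. Write $\mathcal E=\C^{n}\oplus_{1}A_{np-2}^{p}$ for the $\ell^{1}$-sum, with norm $\|(c,g)\|_{\mathcal E}=\sum_{k=0}^{n-1}|c_{k}|+\|g\|_{A_{np-2}^{p}}$, where we index $\C^{n}$ by $\{0,1,\ldots,n-1\}$. By Zhu's characterization recalled above, the map
$$Jf=\bigl((f(0),f'(0),\ldots,f^{(n-1)}(0)),\,f^{(n)}\bigr)$$
is a linear isometry of $B_{p}$ equipped with the norm \eqref{eqn:equivnorm} \emph{onto} $\mathcal E$: it is isometric by the very definition of the norm, and it is surjective because every $g\in A_{np-2}^{p}$ has an $n$-fold antiderivative lying in $B_{p}$. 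Under $J$ the function $z^{k}/k!$ is carried to the standard basis vector $e_{k}\in\C^{n}\subseteq\mathcal E$, which has $\|e_{k}\|_{\mathcal E}=1$. Hence it is enough to prove that every surjective linear isometry $S$ of $\mathcal E$ satisfies $Se_{k}=\lambda_{k}e_{\pi(k)}$ for a permutation $\pi$ of $\{0,\ldots,n-1\}$ and unimodular scalars $\lambda_{k}$, and then conjugate back by $J$.

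I would do this through $L$-structure. Recall that an \emph{$L$-projection} is a linear projection $P$ with $\|x\|=\|Px\|+\|x-Px\|$ for all $x$, and its range is an \emph{$L$-summand}; the $L$-summands form a complete Boolean algebra which every surjective isometry carries to itself (via $P\mapsto SPS^{-1}$), so atoms are sent to atoms. Now one identifies the $L$-summands of $\mathcal E$. Since $p>1$, the space $A_{np-2}^{p}$ is strictly convex (being a closed subspace of an $L^{p}(\mu)$ space with $p>1$), so $\|w_{1}+w_{2}\|=\|w_{1}\|+\|w_{2}\|$ with both summands nonzero forces proportionality; hence $A_{np-2}^{p}$ has no $L$-summand other than $\{0\}$ and itself. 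Together with the standard fact that every $L$-summand of an $\ell^{1}$-sum $Z_{1}\oplus_{1}Z_{2}$ splits as $W_{1}\oplus_{1}W_{2}$ with $W_{i}$ an $L$-summand of $Z_{i}$ (the $L$-summands of $\C^{n}=\ell^{1}_{n}$ being exactly the coordinate subspaces), this shows the $L$-summands of $\mathcal E$ are precisely $\bigl(\bigoplus_{k\in F}\C e_{k}\bigr)\oplus Y$ with $F\subseteq\{0,\ldots,n-1\}$ and $Y$ either $\{0\}$ or $A_{np-2}^{p}$. In particular the minimal nonzero $L$-summands of $\mathcal E$ are exactly the lines $\C e_{0},\ldots,\C e_{n-1}$ and the subspace $A_{np-2}^{p}$, and their span is all of $\mathcal E$.

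A surjective isometry $S$ of $\mathcal E$ therefore permutes these atoms; since $A_{np-2}^{p}$ is the unique one of infinite dimension, it is fixed setwise and $S$ permutes the lines: $S(\C e_{k})=\C e_{\pi(k)}$ for some permutation $\pi$ of $\{0,\ldots,n-1\}$. From $\|e_{k}\|_{\mathcal E}=\|Se_{k}\|_{\mathcal E}=1$ we get $Se_{k}=\lambda_{k}e_{\pi(k)}$ with $|\lambda_{k}|=1$, and transporting back by $J$ gives $T(z^{k}/k!)=\lambda_{k}z^{\pi(k)}/\pi(k)!$, which is the assertion. (Equivalently: $S$ maps the finite factor $\C^{n}$ onto itself and restricts there to a surjective isometry of $\ell^{1}_{n}$, whose form --- a unimodular coordinate permutation --- is classical.)

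The step I expect to be the genuine obstacle is that $T$ is assumed only isometric, not surjective, so the Boolean-algebra argument does not apply verbatim --- indeed $\ell^{1}_{n}$ embeds isometrically into $\mathcal E$ in ways not supported on the finite factor, so one cannot conclude a priori that $T$ preserves $\mathrm{span}\{1,z,\ldots,z^{n-1}\}$. To cover a general isometry one needs an extra input; the most promising is to extract from $T$ a (not necessarily surjective) linear isometry of $A_{np-2}^{p}$ into itself, apply Theorem~\ref{thm:kolaski} --- which does \emph{not} require surjectivity --- to pin down that component, and then reassemble with the action on the lower-order Taylor coefficients; alternatively one can pass to $T^{**}$ and use the $L$-structure of $B_{p}^{**}$, or bring in the finer $M$-structure methods in the spirit of \cite{HJMAPS}. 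The remaining verifications --- the coordinatewise splitting of $L$-summands of an $\ell^{1}$-sum and the strict convexity of $A_{np-2}^{p}$ for $p>1$ --- are standard.
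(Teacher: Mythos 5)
The paper does not actually prove this proposition --- it is imported verbatim from \cite{HJ} as their Corollary 2.1 --- so there is no internal argument to measure you against; your proposal has to stand on its own. The surjective half of what you write is essentially sound: the reduction via $J$ to the $\ell^1$-sum $\C^n\oplus_1 A_{np-2}^p$, the observation that strict convexity of $A_{np-2}^p$ (a subspace of $L^p(\mu)$, $p>1$) leaves it with no nontrivial $L$-summands, the coordinatewise splitting of $L$-summands of an $\ell^1$-sum, and the fact that a surjective isometry permutes the atoms of the Boolean algebra of $L$-projections are all correct and would yield the conclusion for \emph{onto} isometries. But the proposition is stated, and crucially is applied in Theorem \ref{thm:equivnorm}, for isometries of $B_p$ \emph{into} itself: $\CO$ is never assumed surjective, and ruling out non-rotation symbols is exactly the non-surjective case. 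You flag this yourself as ``the genuine obstacle,'' but none of the proposed repairs is carried out, and the first one is circular: to ``extract from $T$ an into isometry of $A_{np-2}^p$ and apply Theorem \ref{thm:kolaski}'' you would already need to know that $T$ respects the $\ell^1$-splitting --- that the $A_{np-2}^p$-component of $Tf$ depends only on $f^{(n)}$ and preserves its norm --- which is essentially the content of the proposition. Passing to $T^{**}$ does not restore surjectivity, so that route is not a fix either.

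A telling symptom that something essential is missing: your argument nowhere uses $p\neq 2$. For surjective isometries the conclusion holds for every $1<p<\infty$ by exactly your reasoning (strict convexity is all that is needed, and Hilbert spaces are strictly convex), so the hypothesis $p\neq 2$ must be doing its work precisely in the non-surjective case --- the case you have not proved. Indeed, the rigidity of \emph{into} isometries of these $\ell^1$-sums is where the real work lies; Hornor and Jamison obtain it through Forelli--Rudin--Kolaski-type equimeasurability arguments in which $p\neq 2$ is indispensable, in the spirit of Theorem \ref{thm:kolaski}. As it stands, your proposal proves a genuinely weaker statement (the surjective case) than the one Proposition \ref{prop:permutation} asserts and Theorem \ref{thm:equivnorm} requires.
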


Applying this theorem to the specific setting of composition operators we obtain the following.

\begin{theorem}\label{thm:equivnorm}
Let $1<p<\infty$ with $p\neq 2$ and let $n\geq 2$.  Then $\CO$ is an isometry on $B_p$ with the norm of Eqn. (\ref{eqn:equivnorm}) if and only if $\Ph$ is a rotation of the disk.
\end{theorem}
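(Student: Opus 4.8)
The plan is to reduce the whole statement to Proposition~\ref{prop:permutation}, which already determines how any isometry of $B_p$ (in the norm of Eqn.~(\ref{eqn:equivnorm})) acts on the monomials $z^k/k!$ for $0\le k\le n-1$.

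First I would dispatch the easy direction. If $\Ph(z)=\lambda z$ with $|\lambda|=1$, then $f\circ\Ph$ is the map $z\mapsto f(\lambda z)$, so $(f\circ\Ph)^{(k)}(z)=\lambda^{k}f^{(k)}(\lambda z)$ for every $k$; hence $|(f\circ\Ph)^{(k)}(0)|=|f^{(k)}(0)|$ for $0\le k\le n-1$, and by rotation invariance of area measure $\|(f\circ\Ph)^{(n)}\|_{A_{np-2}^{p}}^{p}=\int_{\D}|f^{(n)}(\lambda z)|^{p}(1-|z|^{2})^{np-2}\,dA(z)=\|f^{(n)}\|_{A_{np-2}^{p}}^{p}$. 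Thus $\|\CO f\|=\|f\|$ and $\CO$ is an isometry.

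For the converse I would assume $\CO$ is an isometry on $B_p$ in this norm and apply Proposition~\ref{prop:permutation} with $T=\CO$, obtaining a permutation $\pi$ of $\{0,1,\dots,n-1\}$ and unimodular $\lambda_k$ with $\Ph^{k}/k!=\lambda_{k}z^{\pi(k)}/\pi(k)!$ for each $k$. The $k=0$ relation is $1=\lambda_{0}z^{\pi(0)}/\pi(0)!$, which forces $\pi(0)=0$, so by injectivity $\pi(1)\ge 1$. The $k=1$ relation gives $\Ph(z)=\lambda_{1}z^{\pi(1)}/\pi(1)!$; writing $m:=\pi(1)\ge 1$ and $c:=\lambda_{1}/m!\ne 0$, we see $\Ph(z)=cz^{m}$ is a nonzero monomial of degree $m$. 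Then for $1\le k\le n-1$ the relation $\Ph^{k}/k!=\lambda_{k}z^{\pi(k)}/\pi(k)!$ equates a nonzero monomial of degree $km$ with a nonzero monomial of degree $\pi(k)$, forcing $\pi(k)=km$. Taking $k=n-1$ (legitimate since $n\ge 2$) gives $(n-1)m=\pi(n-1)\le n-1$, so $m\le 1$ and hence $m=1$; then $\Ph(z)=cz$ with $|c|=|\lambda_{1}|=1$, i.e.\ $\Ph$ is a rotation of the disk.

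I do not expect a genuine obstacle here, since the real content sits in Proposition~\ref{prop:permutation}. The two points that need care are: extracting $\pi(0)=0$ from the $k=0$ relation, which is what guarantees $\Ph$ is a nonconstant monomial rather than a constant; and justifying the comparison of exponents in the monomial identities, which is valid precisely because both sides carry nonzero coefficients. It is worth stressing that the self-map hypothesis alone does not exclude $\Ph(z)=cz^{m}$ with $m\ge 2$ and $|c|=1/m!$, so the permutation-plus-degree bookkeeping is exactly what eliminates these spurious candidates.
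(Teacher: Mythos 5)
Your proposal is correct and follows essentially the same route as the paper: both directions reduce to Proposition~\ref{prop:permutation}, with $\pi(0)=0$ extracted from the $k=0$ relation and $\Ph$ identified as a nonzero monomial $cz^{m}$ from the $k=1$ relation. The only (harmless) difference is in how $m=1$ is forced: the paper compares the $k=2$ relation and derives the impossible factorial identity $2(k!)^2=(2k)!$ for $k\ge 2$, whereas you use the degree constraint $\pi(n-1)=(n-1)m\le n-1$, which is a slightly cleaner piece of bookkeeping that also covers $n=2$ without a separate remark.
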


\begin{proof}
If $\Ph$ is a rotation of the disk, then a simple computation shows that $\CO$ is an isometry on $B_p$.  Assuming that $\CO$ is an isometry, we may apply the result of Proposition \ref{prop:permutation}.  Note that it suffices to show that $\pi(1)=1$ since this will imply that $$\Ph(z)=\CO z=\lambda_1 z,$$ where $\lambda_1$ is a unimodular constant.  First observe that for $k=0$, we have $$1=\CO1=\lambda_0\frac{z^{\pi(0)}}{\pi(0)!}$$ for all $z\in \D$ and where $\lambda_0$ is a unimodular constant.  If $\pi(0)\geq 1$, then $$\left|\lambda_0\frac{z^{\pi(0)}}{\pi(0)!}\right|<1$$ for all $z\in\D$; thus $\pi(0)=0$.

Now we set $\pi(1)=k\in\{1,2,3,\ldots,n-1\}$. Then we have $$\Ph(z)=\CO z=\lambda_1\frac{z^k}{k!},$$ where $\lambda_1$ is a unimodular constant.  Also, setting $\pi(2)=m\in\{1,2,3,\ldots,n-1\}\setminus\{k\}$, we see that $$\frac{\Ph(z)^2}2=\CO\left(\frac{z^2}{2}\right)=\lambda_2\frac{z^m}{m!},$$ where $\lambda_2$ is a unimodular constant, and it follows that $$\lambda_1^2\frac{z^{2k}}{2(k!)^2}=\lambda_2\frac{z^m}{m!}.$$  Since this equality holds in $\D$ (in $\overline{\D}$ in fact) it must therefore be the case that $2k=m$ and $$\frac{\lambda_1^2}{2(k!)^2}=\frac{\lambda_2}{m!}.$$  Taking the modulus of this last expression shows that $2(k!)^2=m!$ which implies that $2(k!)^2=(2k)!$.  Simplifying shows that $$k!=k(2k-1)(2k-2)\cdots(k+1)$$ which is impossible for $k\in\{2,3,\ldots,n-1\}$.  Thus it must be the case that $k=1$ and therefore $\Ph(z)=\lambda_1 z$ for some unimodular constant $\lambda_1$.
\end{proof}

\bibliographystyle{amsplain}
\bibliography{references.bib}
\end{document}